\newcommand{\zero}{\mathbf{0}}
\newcommand{\R}{\mathds{R}}
\newcommand{\N}{\mathds{N}}
\newcommand{\co}{\mathrm{co}}
\newcommand{\Comb}{\mathbf{Comb}}
\newcommand{\HH}{\mathscr{H}} 
\newcommand{\x}{\mathbf{x}} 
\newcommand{\e}{\mathbf{e}} 
\newcommand{\A}{\mathbf{A}}
\newcommand{\y}{\mathbf{y}} 
\newcommand{\M}{\mathbf{M}} 
\newtheorem{theorem}{Theorem}[section]
\newtheorem{lemma}{Lemma}[section]
\newtheorem{remark}{Remark}[section]
\newtheorem{definition}{Definition}[section]
\author{{\bf Nadezda Sukhorukova}, Swinburne University of Technology, Australia,\\ {nsukhorukova@swin.edu.au},\\
            {\bf Julien Ugon}, Centre for Informatics and Applied Optimization, \\ Federation University Australia,  j.ugon@federation.edu.au,\\
            {\bf David Yost}, Centre for Informatics and Applied Optimization,  \\ Federation University Australia, d.yost@federation.edu.au\\}
  \date{}
\begin{document}
\title{Chebyshev multivariate polynomial approximation and point reduction procedure}
%
%
\maketitle

\abstract{
   The theory of Chebyshev (uniform) approximation for univariate polynomial and piecewise polynomial functions has been studied for decades. The optimality conditions are based on the notion of alternating sequence. However, the extension the notion of alternating sequence to the case of multivariate functions is not trivial.  The contribution of this paper is two-fold. First of all, we give a geometrical interpretation of the necessary and sufficient optimality condition for multivariate approximation. These optimality conditions are not limited to the  case polynomial approximation, where the basis functions are monomials.  Second, we develop an algorithm for fast necessary optimality conditions verifications (polynomial case only). Although, this procedure only verifies the necessity, it is much faster than the necessary and sufficient conditions verification. This procedure is based on a point reduction procedure and resembles the univariate alternating sequence based optimality conditions. In the case of univariate approximation, however, these conditions are both necessary and sufficient. Third, we propose a procedure for necessary and sufficient optimality conditions verification that is based on a generalisation of the notion of alternating sequence to the case of multivariate polynomials.}
 \textbf{Keywords:} multivariate polynomials, Chebyshev approximation, best approximation conditions\\
\textbf {Math subject classification}[2010]: {49J52, 90C26, 41A15, 41A50}
\section{Introduction}\label{sec:introduction}

In this paper, we obtain Chebyshev (uniform) approximation optimality conditions for multivariate functions. The theory of Chebyshev approximation for univariate functions (in particular, polynomial and piecewise polynomial approximation) was developed in the late nineteenth (Chebyshev~\cite{chebyshev}) and twentieth century (\cite{nurnberger, rice67, Schumaker68} and many others). Most authors were working on polynomial and polynomial spline approximations, due to their simplicity and flexibility; however, other types of  functions (for example, trigonometric functions) have also been used. Most univariate approximation optimality conditions are based on the notion of alternating sequence: maximal deviation points with alternating deviation signs.

There have been several attempts to extend this theory to the case of multivariate functions~\cite{rice63}. In this paper the author underlines the fact that the main difficulty is to extend the notion of alternating sequence to the case of more than one variable, since $\R^d$, unlike $\R$, is not totally ordered and therefore the extension of the notion of alternating sequence is not trivial.

There have been also  several studies in a slightly different direction.   Several researchers were  working in the area of multivariate interpolation~\cite{Nurn_Davydov98multivar_interpolation, Nurnberger_multivariate_inter}, where triangulation based approaches were used to extend the notion of polynomial splines to the case of multivariate functions. These papers also dedicated to  the extension of the notion of polynomial splines to the case of multivariate approximation, since it is not very clear how to extend the notion of knots (points of switching from one polynomial to another) in $\R^d$.

The objective functions, appearing in the corresponding optimisation problems are convex and nonsmooth (minimisation of the maximal absolute deviation). Therefore, it is natural to use nonsmooth optimisation techniques to tackle this problem. Our approach is based on the notion of subdifferentials of convex functions~\cite{Rockafellar70}. Subdifferentials can be considered as a generalisation of the notion of gradients for convex nondifferential functions. In particular, our objective function is the supremum of affine functions and therefore we use~\cite[Theorem 2.4.18]{Zalinescu2002}. The necessary and sufficient optimality conditions first appeared in \cite{matrix}, a short discussion paper submitted to MATRIX program volume ``Approximation and Optimisation\rq{}\rq{}. In the current paper we elaborate the results. 
In particular, we develop a fast point reduction based algorithm for necessary optimality condition verification. Apart from being computationally efficient, this algorithm clearly connects univariate alternating sequence for univariate and multivariate cases. We also propose a procedure for necessary and sufficient optimality conditions verification that is based on a generalisation of the notion of alternating sequence to the case of multivariate polynomials.

 The paper is organised as follows. In Section~\ref{sec:optimality conditions} we present the most relevant results from the theory of convex and nonsmooth analysis, investigate the extremum properties of the objective function appearing in Chebyshev approximation problems from the points of view of convexity and nonsmooth analysis and develop the necessary and sufficient optimality conditions. 
  Then in Section~\ref{seq:relation_with_existing_multivariate} we demonstrate the relation with other optimality results (multivariate case), obtained by J. Rice~\cite{rice63}. In Section~\ref{sec:counterexample} we use the optimality conditions obtained in Section~\ref{sec:optimality conditions} to obtain a generalisation of the notion of alternating sequence in multivariate settings. This generalisation offers a full (necessary and sufficient) characterisation of best approximation by multivariate polynomials.
 In Section~\ref{sec:algorithm} we develop a fast algorithm for necessary optimality condition verification and demonstrate its similarity with univariate point reduction. 
  Finally, in Section~\ref{sec:conclusions} we draw our conclusions and underline further research directions.

 \section{Optimality conditions}\label{sec:optimality conditions}

 \subsection{Convexity of the objective}\label{ssec:convexObjective}
 We start with the analysis of the objective function. A continuous function $f$ is to be approximated on a compact set $Q\in \R^d$ by a function
 \begin{equation}\label{eq:model_function}
 L(\A,\x)=a_0+\sum_{i=1}^{n}a_ig_i(\x),
 \end{equation}
 where $g_i(\x)$ are the basis functions and the multipliers $\A = (a_1,\dots,a_n)$ are the corresponding coefficients. In the case of polynomial approximation, basis functions are monomials. Other types of basis functions (for example, trigonometric) are also possible.  At a point \(x\) the deviation between the function \(f\) and the approximation is:
 \begin{equation}
   d(\A,\x) = |f(\x) - L(\A,\x)|.
\end{equation}
   \label{eq:deviation}
 The uniform approximation error over the set \(Q\) is
 \begin{equation}
   \label{eq:uniformdeviation}
\Psi(\A)=\|f(\x)-a_0-\sum_{i=1}^{n}a_ig_i(\x)\|_{\infty},
\end{equation}
where set $Q$ is a hyperbox, such that $p_i\leq x_i\leq q_i,~i=1,\dots,d$ or a finite set of points.

Note that
 $$\Psi(\A)=\sup_{\x\in Q} \max\{f(\x)-a_0-\sum_{i=1}^{n}a_ig_i(\x),a_0+\sum_{i=1}^{n}a_ig_i(\x)-f(\x)\}$$
 and therefore the corresponding optimisation problem is as follows.
 \begin{equation}\label{eq:obj_fun_con}
   \mathrm{minimise~}\Psi(\A) \mathrm{~subject~to~} \A\in \R^{n+1}.
 \end{equation}

 Since the function \(L(\A,\x)\) is linear in \(\A\), the approximation error function \(\Psi(\A)\), as the supremum of affine functions, is convex. Convex analysis tools~\cite{Rockafellar70} can be applied to study this function.
 
 Define by \(E^+(\A)\) and \(E^-(\A)\)  the points of maximal positive and negative deviation (extreme points):
 \begin{align*}
   E^+(\A) &= \Big\{\x\in Q:  L(\A,\x) - f(\x) = \max_{\y\in Q} d(A,\y)\Big\}\\
   E^-(\A) &= \Big\{\x\in Q: f(\x) - L(\A,\x) = \max_{\y\in Q} d(\A,\y)\Big\}
 \end{align*}
 and the corresponding \(G^+(\A)\) and \(G^-(\A)\) as
 \begin{align*}
   G^+(\A) &= \Big\{(1,g_1(\x),\dots,g_n(\x))^T: \x\in E^+(\A)\Big\}\\
   G^-(\A) &= \Big\{(1,g_1(\x),\dots,g_n(\x))^T: \x\in E^-(\A)\Big\}
 \end{align*}
 
Then the subdifferential of the approximation error function \(\Psi(\A)\) at a point \(\A\)  can be obtained using the active affine functions in the supremum~\cite[Theorem 2.4.18]{Zalinescu2002} and \cite{ioffeTikhomirov}:
 \begin{equation}
   \label{eq:subdifferentialObjective}
   \partial \Psi(\A) = \co\left\{G^+(\A)\cap G^-(\A)\right\}.   
\end{equation}   

 $A^*$ is a minimum of a convex function~$\Psi(A)$ if and only if the following condition holds~\cite{Rockafellar70}:
\begin{equation}
   \label{eq:subdifferential_optimal}
   \zero_{n+1}\in \partial \Psi(\A^*) = \co\left\{G^+(\A^*)\cap G^-(\A^*)\right\}.   
\end{equation}     
This condition is as a necessary and sufficient optimality condition for Chebyshev approximation. In the rest of this section we demonstrate how this condition can be interpreted geometrically.   

\subsection{Optimality conditions: general case}\label{ssec:opt_general}

In the case of univariate polynomial approximation, the optimality conditions are based on the notion of alternating sequence.
\begin{definition}
A sequence of maximal deviation points whose deviation signs are alternating is called an alternating sequence (also called alternance). 
\end{definition}
This problem was studied by Chebyshev~\cite{chebyshev}.
\begin{theorem}(Chebyshev)
A degree $n$ polynomial approximation is optimal if and only if there exist $n+2$ alternating points sequence.   
\end{theorem}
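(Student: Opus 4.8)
The plan is to derive Chebyshev's classical alternation theorem as a corollary of the subdifferential optimality condition~\eqref{eq:subdifferential_optimal}, specialised to the univariate polynomial setting where the basis functions are $g_i(x)=x^i$. The vectors in $G^+(\A^*)\cup G^-(\A^*)$ are then of the form $\pm(1,x,x^2,\dots,x^n)^T$, i.e.\ signed points on the moment curve in $\R^{n+1}$. The optimality condition says $\zero_{n+1}$ lies in the convex hull of such signed vectors, one for each maximal deviation point, with the sign matching the sign of the deviation. So the whole theorem reduces to a statement in convexity and the theory of Chebyshev (Haar) systems: $\zero$ is a convex combination of vectors $\varepsilon_j(1,x_j,\dots,x_n^j)^T$ (with $\varepsilon_j=\pm1$ the deviation sign at $x_j$ and $x_1<\dots<x_m$) if and only if, after ordering the points, the signs $\varepsilon_j$ alternate and $m\ge n+2$.

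First I would establish the ``only if'' direction. Suppose there is no alternating subsequence of length $n+2$. Then among the maximal deviation points, ordered increasingly, one can group them into at most $n+1$ blocks of constant sign. Pick a partition of the real line into at most $n+1$ intervals so that each block lies in one interval; by choosing cut points strictly between blocks of opposite sign one builds a polynomial $p$ of degree $\le n$ whose sign on each block matches $\varepsilon_j$ (a product of at most $n$ linear factors with roots at the cut points, times a suitable global sign). Then for every maximal deviation point $x_j$ we have $\varepsilon_j p(x_j)>0$, which means the linear functional $\A\mapsto\langle (\text{coeffs of }p),\A\rangle$ is strictly positive on every generator of $\partial\Psi(\A^*)$, hence strictly positive on the whole convex hull, so $\zero\notin\partial\Psi(\A^*)$ and $\A^*$ is not optimal. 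Contrapositively, optimality forces an alternating sequence of length $\ge n+2$.

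For the ``if'' direction, suppose $n+2$ alternating points $x_0<x_1<\dots<x_{n+1}$ with signs $\varepsilon_j=(-1)^j\varepsilon_0$ exist. I want to write $\zero_{n+1}$ as a convex combination of $\varepsilon_j v_j$ where $v_j=(1,x_j,\dots,x_n^j)^T$. Equivalently I need positive coefficients $\lambda_j$ with $\sum_j \lambda_j\varepsilon_j v_j=\zero$; such a relation among $n+2$ vectors in $\R^{n+1}$ comes from the unique (up to scale) linear dependence, whose coefficients are the signed $n\times n$ minors (Cramer-type cofactors of the Vandermonde-type matrix). The key computation is that these cofactors are Vandermonde-like determinants of the $x_j$ with one node omitted; they are all nonzero and their signs alternate with $j$ exactly because $x_0<\dots<x_{n+1}$. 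Multiplying through by $\varepsilon_j$ makes all coefficients of the same sign, so after normalising they are positive and sum to one — giving $\zero\in\co\{\varepsilon_j v_j\}\subseteq\partial\Psi(\A^*)$, hence optimality.

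The main obstacle is the sign bookkeeping in both directions: in the ``only if'' part, carefully arguing that at most $n+1$ sign-constant blocks can be separated by a degree-$\le n$ polynomial (this is exactly where the Haar/Chebyshev-system property of $\{1,x,\dots,x^n\}$ enters, namely that a nonzero such polynomial has at most $n$ roots); and in the ``if'' part, pinning down that the cofactors of the rectangular Vandermonde matrix genuinely alternate in sign. Everything else — convexity of $\Psi$, the form of the subdifferential, the separation argument $\zero\notin C$ iff a functional is strictly positive on $C$ — is either already in the excerpt or standard. I would also remark that this argument is the univariate shadow of the general geometric condition developed in this section, which is the point the paper is building toward.
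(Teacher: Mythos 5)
Your proposal is sound, but note that the paper itself offers no proof of this statement: it is quoted as Chebyshev's classical theorem, with the alternation--convex-hull link only asserted informally later (after Theorem~\ref{thm:main_lin}, where it is said that an alternating sequence ``ensures the common points for the corresponding convex hulls''). What you do is genuinely different from the paper's treatment: you re-derive the classical univariate result from the paper's own optimality condition~\eqref{eq:subdifferential_optimal}, i.e.\ you specialise Theorem~\ref{thm:main} to the Haar system $1,x,\dots,x^n$ and the moment curve. Both halves of your argument are correct in outline, and they isolate exactly the two univariate ingredients that have no multivariate analogue: in the ``only if'' direction, the fact that at most $n$ sign changes of the error on the extreme set can be matched by a degree-$n$ polynomial with prescribed signs (the Haar property), which furnishes a linear functional strictly positive on every generator of $\partial\Psi(\A^*)$ and hence separates $\zero_{n+1}$ from it; in the ``if'' direction, the strict sign alternation of the maximal minors of the $(n+1)\times(n+2)$ Vandermonde-type matrix, which turns the unique linear dependence among the $n+2$ signed vectors into a genuine convex combination equal to $\zero_{n+1}$. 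This makes explicit why the paper must replace alternation by the intersection condition \eqref{eq:convex_hulls_intersect} in $\R^d$, $d>1$. Two small points deserve care when you write it up: the extreme set $E^+(\A^*)\cup E^-(\A^*)$ may be infinite, so the grouping into at most $n+1$ sign-constant blocks and the choice of cut points strictly between blocks needs a short compactness/continuity argument (blocks of opposite sign cannot have touching closures unless the maximal deviation is zero); and the subdifferential formula should be read as $\co\left\{G^+(\A^*)\cup\left(-G^-(\A^*)\right)\right\}$ (the signed active gradients), which is the version your signed vectors $\varepsilon_j(1,x_j,\dots,x_j^n)^T$ correctly use.
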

In the case of multivariate approximation the notion of alternating sequence, as a base for optimality verification, has to be modified. Note that the basis functions 
$$1,~g_i,~i=1,\dots,n$$
are not restricted to monomials. The following theorem  holds \cite{matrix} (we present the proof for completeness).
\begin{theorem}\label{thm:main}
$\A^*$ is an optimal solution to problem~(\ref{eq:obj_fun_con}) if and only if the convex hulls of the vectors $(g_1(\x),\dots,g_n(\x))^T,$ built over corresponding positive and negative maximal deviation points, intersect:
\begin{equation}\label{eq:convex_hulls_intersect}
\co\left\{G^+(\A^*)\right\}\cap\co\left\{-G^-(\A^*)\right\}\ne\emptyset.
\end{equation}
\end{theorem}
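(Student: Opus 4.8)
The plan is to derive the characterization directly from the subdifferential optimality condition~(\ref{eq:subdifferential_optimal}), translating the statement ``$\zero_{n+1}$ lies in the convex hull of $G^+(\A^*)\cup(-G^-(\A^*))$'' into a statement about the two convex hulls $\co\{G^+(\A^*)\}$ and $\co\{-G^-(\A^*)\}$ intersecting. First I would fix notation: write each vector in $G^+(\A^*)$ as $(1,g_1(\x),\dots,g_n(\x))^T$ and each vector in $-G^-(\A^*)$ as $(-1,-g_1(\x),\dots,-g_n(\x))^T$, so that the first coordinate records the sign of the deviation and the remaining $n$ coordinates are the ``geometric'' part. Note also the edge case where one of $E^+(\A^*)$, $E^-(\A^*)$ is empty must be handled separately (if both are nonempty the error is positive; if the error is zero the approximation is exact and the claim is vacuous or trivially interpreted), so I would dispose of that first.

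The core of the argument is an ``if and only if'' built on Carath\'eodory-style convex combinations. For the forward direction, suppose $\A^*$ is optimal. By~(\ref{eq:subdifferential_optimal}) there are points $\x_1,\dots,\x_k\in E^+(\A^*)$, points $\y_1,\dots,\y_m\in E^-(\A^*)$, and nonnegative weights $\lambda_i,\mu_j$ summing to $1$ with $\sum_i\lambda_i(1,g(\x_i))^T - \sum_j\mu_j(1,g(\y_j))^T = \zero_{n+1}$, where I abbreviate $g(\x)=(g_1(\x),\dots,g_n(\x))$. Reading the first coordinate gives $\sum_i\lambda_i = \sum_j\mu_j$, and since the total is $1$ each sum equals $1/2$; rescaling the $\lambda_i$ and $\mu_j$ by a factor of $2$ produces bona fide convex-combination weights. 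Reading the remaining $n$ coordinates then gives $\sum_i(2\lambda_i)g(\x_i) = \sum_j(2\mu_j)g(\y_j)$, i.e.\ a common point of $\co\{G^+(\A^*)\}$ and $\co\{-G^-(\A^*)\}$ (after noting $-G^-$ refers to negating only the geometric coordinates, consistent with~(\ref{eq:convex_hulls_intersect})). The converse runs the same computation in reverse: a common point yields weights summing to $1/2$ on each side, whose difference of lifted vectors is $\zero_{n+1}$, placing $\zero_{n+1}$ in $\partial\Psi(\A^*)$ and hence certifying optimality.

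The step I expect to be the main obstacle -- or at least the one requiring the most care -- is reconciling the bookkeeping between the $(n+1)$-dimensional ``lifted'' vectors that appear in the subdifferential formula~(\ref{eq:subdifferential_optimal}) and the $n$-dimensional vectors in the statement of the theorem, and in particular justifying why the first coordinate being a constant $\pm1$ forces the weights to split evenly as $1/2$ and $1/2$. One must be careful that the convex hull in~(\ref{eq:subdifferential_optimal}) is taken over the \emph{union} $G^+\cup(-G^-)$ rather than an intersection (despite the notation $G^+\cap G^-$ in~(\ref{eq:subdifferentialObjective}), which is really describing the subdifferential as the convex hull of all active gradients with their signs), and to state clearly that $-G^-$ in~(\ref{eq:convex_hulls_intersect}) negates precisely the $n$ geometric coordinates. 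Everything else is routine linear algebra; no compactness or continuity beyond what is already used to guarantee $E^\pm(\A^*)$ are nonempty closed sets and the subdifferential formula holds is needed.
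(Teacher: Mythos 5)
Your proposal is correct and follows essentially the same route as the paper: both arguments start from $\zero_{n+1}\in\partial\Psi(\A^*)$, use the constant first coordinate of the lifted vectors to force the weights to split as $\tfrac12$--$\tfrac12$, rescale, and read off the remaining $n$ coordinates (the paper merely adds a Carath\'eodory reduction that is not needed for the equivalence itself). One small caveat: your parenthetical gloss that $-G^-$ ``negates only the geometric coordinates'' contradicts the identity you actually derived, $\sum_i(2\lambda_i)g(\x_i)=\sum_j(2\mu_j)g(\y_j)$, which exhibits a common point of the \emph{un-negated} hulls over $E^+(\A^*)$ and $E^-(\A^*)$; the minus sign in \eqref{eq:convex_hulls_intersect} is a notational slip in the paper (its own proof concludes with the un-negated intersection \eqref{eq:opt_main2}), so the condition you derived is the intended one and no negation of the $g$-coordinates should be introduced.
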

\begin{proof}
The vector \(\A^*\) is an optimal solution to the convex problem \eqref{eq:obj_fun_con}  if and only if
\[
  \zero_{n+1} \in \partial \Psi(\A^*),
\]
where $\Psi$ is defined in \eqref{eq:uniformdeviation}.
Note that due to Carath\'eodory's theorem, $\zero_{n+1}$ can be constructed as a convex combination of a finite number of points (one more than the dimension of the corresponding space). Since the dimension of the corresponding space is $n+1$, it can be done using at most $n+2$ points.

Assume that in this collection of $n+2$ points $k$ points ($h_i,~i=1,\dots,k$) are from~$G^+(\A^*)$ and $n+2-k$ ($h_i,~i=k+1,\dots,n+2$) points are from $G^-(\A^*)$. Note that $0<k<n+2$, since the first coordinate is either~1 or $-1$ and therefore $\zero_{n+1}$ can only be formed by using both sets ($G^+(\A^*)$ and $-G^-(\A^*)$). Then
$$\zero_{n+1}=\sum_{i=1}^{n+2}\alpha_ih_i,~0\leq\alpha\leq 1.$$
Let $0<\gamma=\sum_{i=1}^{k}\alpha_i$, then
$$\zero_{n+1}=\sum_{i=1}^{n+2}\alpha_ih_i=\gamma\sum_{i=1}^{k}\frac{\alpha_i}{\gamma}h_i+(1-\gamma)\sum_{i=k+1}^{n+2}\frac{\alpha_i}{1-\gamma}h_i=\gamma h^+ +(1-\gamma)h^-,$$
where $h^+\in G^+(\A^*)$ and $h^-\in -G^-(\A^*)$. Therefore, it is enough to demonstrate that $\zero_{n+1}$ is a convex combination of two vectors, one from $G^+(\A^*)$ and one from $-G^-(\A^*)$.

By the formulation of the subdifferential of \(\Psi\) given by \eqref{eq:subdifferentialObjective}, there exists a nonnegative  number \(\gamma \leq 1\) and two vectors
\[
  g^+ \in \co\left\{ \begin{pmatrix}
1\\
g_1(\x)\\
g_2(\x)\\
\vdots \\
g_n(\x)
\end{pmatrix}: \x \in E^+(\A^*)\right\}, \mathrm{~and~}
  g^- \in \co\left\{ \begin{pmatrix}
1\\
g_1(\x)\\
g_2(\x)\\
\vdots \\
g_n(\x)
\end{pmatrix}: \x \in E^-(\A^*)\right\}
\]
such that \(\zero = \gamma g^+ - (1-\gamma) g^-\). Noticing that the first coordinates \(g^+_1 = g^-_1 = 1\), we see that \(\gamma = \frac{1}{2}\). This means that \(g^+ - g^- = 0\). This happens if and only if
 \begin{equation}\label{eq:opt_main2}
 \co\left\{
\left(
\begin{matrix}
1\\
g_1(\x)\\
g_2(\x)\\
\vdots
\\
g_n(\x)\\
\end{matrix}
\right): \x \in E^+(\A^*)
 \right
  \}\cap
  \co\left\{
\left(
\begin{matrix}
1\\
g_1(\x)\\
g_2(\x)\\
\vdots
\\
g_n(\x)\\
\end{matrix}
\right): \x \in E^-(\A^*)
 \right \}\ne\emptyset.
 \end{equation}
 As noted before, the first coordinates of all these vectors are the same, and therefore the theorem is true, since if $\gamma$ exceeds one, the solution where all the components are divided by $\gamma$ can be taken as the corresponding coefficients in the convex combination.
 \end{proof}
Equivalent results have been obtained in~\cite{rice63}. Rice's optimality verification is based on separation of positive and negative maximal deviation points by a polynomial of the same degree as the degree of the approximation ($m$). If there exists no polynomial of degree~$m$ that separates positive and negative maximal deviation points, but the removal of any maximal deviation point results in the ability to separate the remaining points by a polynomial of degree~$m$ (see Section~\ref{seq:relation_with_existing_multivariate}). The conditions of Theorem~\ref{thm:main} are easier to verify, since we only need to check if two polytopes are intersecting (convex quadratic problem). This can be done using, for example, CGAL software~\cite{cgal}.  
 
When $n$ is very large the verification of these optimality conditions it may be beneficial to simplify these optimality conditions even further.  In the rest of this section we show how Theorem~\ref{thm:main} can be used to formulate necessary and sufficient optimality conditions for the case of multivariate polynomial approximation. We also develop a fast algorithm that verifies optimality conditions for multivariate polynomial approximation (necessity only).

\subsection{Optimality conditions for multivariate linear functions}
\label{ssec:opt_linear_multi}
In the case of linear functions (multivariate case) $n=d$ and Theorem~\ref{thm:main} can be formulated as follows.
\begin{theorem}\label{thm:main_lin}
The convex hull of the maximal deviation points with positive deviation and convex hull of the maximal deviation points with negative deviation have common points.
\end{theorem}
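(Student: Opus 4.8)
The plan is to obtain Theorem~\ref{thm:main_lin} as a direct specialisation of Theorem~\ref{thm:main}. In the multivariate linear case we have $n=d$ and the basis functions are the coordinate functions $g_i(\x)=x_i$, $i=1,\dots,d$. Hence the augmented vectors that appear in the optimality condition are simply $(1,g_1(\x),\dots,g_n(\x))^T=(1,x_1,\dots,x_d)^T$, i.e. each extreme point $\x$ carried along with a leading $1$. In particular
\[
  G^+(\A^*)=\{(1,x_1,\dots,x_d)^T:\x\in E^+(\A^*)\},\qquad
  G^-(\A^*)=\{(1,x_1,\dots,x_d)^T:\x\in E^-(\A^*)\}.
\]

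Next I would observe that the first coordinate of every vector in $G^\pm(\A^*)$ equals $1$, and this is preserved under convex combinations. Therefore, as subsets of $\R^{d+1}$,
\[
  \co\{G^+(\A^*)\}=\{1\}\times\co\{E^+(\A^*)\},\qquad
  \co\{G^-(\A^*)\}=\{1\}\times\co\{E^-(\A^*)\},
\]
so these two sets intersect in $\R^{d+1}$ if and only if their projections onto the last $d$ coordinates, namely $\co\{E^+(\A^*)\}$ and $\co\{E^-(\A^*)\}$, intersect in $\R^d$. Combining this with Theorem~\ref{thm:main}, in the equivalent form~\eqref{eq:opt_main2} (which states that $\A^*$ is optimal iff $\co\{G^+(\A^*)\}\cap\co\{G^-(\A^*)\}\ne\emptyset$), yields that $\A^*$ is optimal if and only if $\co\{E^+(\A^*)\}\cap\co\{E^-(\A^*)\}\ne\emptyset$ — precisely the asserted geometric characterisation, that the convex hull of the positive maximal deviation points meets the convex hull of the negative maximal deviation points.

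Since this is essentially a bookkeeping specialisation, I do not expect a genuine obstacle; the only point requiring a little care is the treatment of the constant term $a_0$. It is exactly the shared leading coordinate $1$ that, in the proof of Theorem~\ref{thm:main}, forces the mixing weight $\gamma$ to equal $\tfrac12$ and so converts the abstract subdifferential inclusion $\zero_{n+1}\in\partial\Psi(\A^*)$ into the clean statement that two polytopes built directly from the extreme points overlap. A secondary remark worth including is that $E^+(\A^*)$ and $E^-(\A^*)$ may be infinite when $Q$ is a hyperbox, so ``convex hull'' here means the convex hull of a possibly infinite set; invoking Carath\'eodory's theorem as in the proof of Theorem~\ref{thm:main} allows one, if desired, to reduce to finitely many extreme points without affecting the conclusion.
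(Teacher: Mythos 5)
Your proposal is correct and matches the paper's intent: the paper gives no separate proof of Theorem~\ref{thm:main_lin}, presenting it simply as the specialisation of Theorem~\ref{thm:main} to $n=d$ with $g_i(\x)=x_i$, which is exactly the bookkeeping you carry out (including the correct reading of the condition in the form~\eqref{eq:opt_main2}, where the common leading coordinate $1$ lets the intersection in $\R^{d+1}$ reduce to $\co\{E^+(\A^*)\}\cap\co\{E^-(\A^*)\}\ne\emptyset$ in $\R^d$).
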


Theorem~\ref{thm:main_lin} can be considered as an alternative formulation to the necessary and sufficient optimality conditions that are based on the notion of alternating sequence. Clearly, Theorem~\ref{thm:main_lin} can be used in univariate cases, since the location of the alternating points sequence ensures the common points for the corresponding convex hulls, constructed over the maximal deviation points with positive and negative deviations respectively.

Note that in general $d\leq n$.

\subsection{Optimality conditions for multivariate polynomial (non-linear)  functions}
\label{ssec:opt_polynomial_multi}

We start by introducing the following definitions and notation.

\begin{definition}
An exponent vector
$$\e=(e_1,\dots,e_d)\in \R^d,~e_i\in \N,~i=1,\dots,d$$ for $\x\in\R^d$
defines a {\em monomial}
$$\x^{\e}=x_1^{e_1} x_2^{e_2} \dots x_d^{e_d}.$$
\end{definition}

\begin{definition}
A product $c\x^{\e},$ where $c\ne 0$ is called the term, then a multivariate polynomial is a sum of a finite number of terms.
\end{definition}

\begin{definition}
The degree of a monomial $\x^{\e}$ is the sum of the components of $\e$:
$$\deg(\x^{\e})=|e|=\sum_{i=1}^{d}e_i.$$
\end{definition}

\begin{definition}
The degree of a polynomial is the largest degree of the monomials composing it.
\end{definition}

Let us consider some essential properties of polynomials and monomials.
\begin{enumerate}
\item For any exponent $\e=(e_1,\dots e_d),$ such that $|e|=m$ the degree of the monomial $\x^{\tilde{\e}}=m+1,$ where  $\tilde{\e}_k=e_{k}+1$ and $\tilde{e}_i=e_i$ for all $i\neq k$, $i=1,\dots,d$, for some $k=1,\dots,d$.
Any monomial of degree $m+1$ can be obtained in such a way. 
\item For any exponent $\e=(e_1,\dots e_d),$ such that $|e|=m$ the degree of the monomial $\x^{\tilde{\e}}=m-1,$ where  $\tilde{\e}_k=e_{k}-1$ and $\tilde{e}_i=e_i$ for all $i\neq k$, $i=1,\dots,d$, for some $k=1,\dots,d$, such that $e_k>0$.
Any monomial of degree $m+1$ can be obtained in such a way. 
\end{enumerate}

Denote the vector of all monomials of degree at most $m$ by $\M_m(x)$, that is components of $\M^m(x)$ have the form $x^e$ where $e\in \N^m: |e|\leq m$. Denote the number of such monomial (the dimension of the vector $\M^m(x)$) by $n_m$.

 In general, a polynomial of degree $m$ can be obtained as follows:
 \begin{equation}\label{eq:polynomials}
 P^m(x)=a_0+\sum_{i=1}^{n_m}a_i\M^m_i(x),
\end{equation}
where $a_i$ are the coefficients and $g_i=x^{e_i}$ are the basis functions there exists $e_k$ such that $|e_k|=m$ and $a_k\ne 0$. Any polynomial $P^m$ from~(\ref{eq:polynomials}) can be presented as the sum of  lower degree polynomials ($m-1$ or less) and a finite number of terms that correspond to the monomials of degree $m$. 

The following lemma is almost obvious, but we state it since it is used repeatedly.

\begin{lemma}\label{lem:monomial}
Consider two sets of non-negative coefficients
\begin{itemize}
\item $\alpha_i\geq ,~i=1,\dots,n$ such that $\sum_{i=1}^{n}\alpha_i=1$;
\item $\beta_i\geq ,~i=1,\dots,n$ such that $\sum_{i=1}^{n}\beta_i=1$.
\end{itemize}
If
\begin{equation}\label{eq:lem_1}
\sum_{i=1}^{n}\alpha_ia_ix_i=\sum_{i=1}^{n}\beta_ib_iy_i
\end{equation}
and \begin{equation}\label{eq:lem_2}
\sum_{i=1}^{n}\alpha_ia_i=\sum_{i=1}^{n}\beta_ib_i
\end{equation}
then for any scalar $\delta$ the following equality holds
\begin{equation}\label{eq:lem_3}
\sum_{i=1}^{n}\alpha_ia_i(x_i-\delta)=\sum_{i=1}^{n}\beta_ib_i(y_i-\delta).
\end{equation}
\end{lemma}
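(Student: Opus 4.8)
The plan is to prove the lemma by a direct expansion, relying only on the distributive law together with the two stated hypotheses. First I would rewrite the left-hand side of \eqref{eq:lem_3} as $\sum_{i=1}^{n}\alpha_ia_i(x_i-\delta)=\sum_{i=1}^{n}\alpha_ia_ix_i-\delta\sum_{i=1}^{n}\alpha_ia_i$, and symmetrically the right-hand side as $\sum_{i=1}^{n}\beta_ib_i(y_i-\delta)=\sum_{i=1}^{n}\beta_ib_iy_i-\delta\sum_{i=1}^{n}\beta_ib_i$. Next I would use \eqref{eq:lem_1} to identify the leading sums $\sum_{i=1}^{n}\alpha_ia_ix_i$ and $\sum_{i=1}^{n}\beta_ib_iy_i$, and \eqref{eq:lem_2}, multiplied through by $\delta$, to identify the terms $\delta\sum_{i=1}^{n}\alpha_ia_i$ and $\delta\sum_{i=1}^{n}\beta_ib_i$. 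Subtracting the two resulting equalities term by term yields \eqref{eq:lem_3}.

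I expect no genuine obstacle here: as the authors already remark, the statement is almost immediate, and the proof is essentially a one-line algebraic manipulation. The only point worth flagging is that the normalisation conditions $\sum_{i=1}^{n}\alpha_i=1$ and $\sum_{i=1}^{n}\beta_i=1$ are not actually used in the computation — the identity follows purely from \eqref{eq:lem_1}, \eqref{eq:lem_2} and linearity — so I would not invoke them in the argument; they appear in the hypotheses because they hold in every application of the lemma, where the $\alpha_i$ and $\beta_i$ are the weights of convex combinations. Finally, the same argument applies verbatim, coordinate by coordinate, if the $x_i$ and $y_i$ are taken to be vectors rather than scalars, which is presumably the form in which the lemma will be used when $x_i$ ranges over monomial values evaluated at extreme points.
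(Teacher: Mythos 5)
Your proof is correct and follows essentially the same route as the paper: expand $\sum_i\alpha_ia_i(x_i-\delta)$ by distributivity, substitute using \eqref{eq:lem_1} and \eqref{eq:lem_2}, and regroup. Your observation that the normalisation conditions $\sum_i\alpha_i=\sum_i\beta_i=1$ are never used is also accurate; the paper's own three-line computation does not invoke them either.
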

\begin{proof}
\begin{align*}
\sum_{i=1}^{n}\alpha_ia_i(x_i-\delta)=&\sum_{i=1}^{n}\alpha_ia_ix_i-\delta\sum_{i=1}^{n}\alpha_ia_i\\
  =&\sum_{i=1}^{n}\beta_ib_iy_i-\delta\sum_{i=1}^{n}\beta_ib_i\\
  =&\sum_{i=1}^{n}\beta_ib_i(y_i-\delta).
\end{align*}
\end{proof}

In the case of polynomial approximation, Condition~\eqref{eq:convex_hulls_intersect} can be written as:
\begin{equation}
	\label{eq:convexHullsPolynomials}
	\{\M^m(\x): x\in E^+\} \cap \{\M^m(\x): x\in E^-\} \neq \emptyset
\end{equation}


Note that due to Lemma~\ref{lem:monomial} one can assume that all the $x_i$ in the monomials are non-negative, since $\delta$ can be chosen as
$$\min\{\min_{i=1,\dots,d}x_i,\min_{i=1,\dots,d}y_i\}.$$  Then Theorem~\ref{thm:main} can be formulated as follows.

\begin{theorem}\label{thm:pol}
A polynomial of degree $m$ is  a best polynomial approximation if and only if there exist non-negative coefficients $\alpha_\x$, $\x\in E^+(\A)\cup E^-(\A)$ (with at least one positive coefficient in each set) such that
$$\sum_{\x\in E^+(\A)}\alpha_{\x}=\sum_{\x\in E^-(\A)}\alpha_{\x}=1,$$
such that for any monomial~$\x^{\e}$ of degree at most $m$ the following equality holds
\begin{equation}\label{eq:monom}
\sum_{\x\in E^+(\A)}\alpha_{\x}\x^{\e}=\sum_{\x\in E^-(\A)}\alpha_{\x}\x^{\e}.
\end{equation}
\end{theorem}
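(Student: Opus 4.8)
The plan is to deduce the statement directly from Theorem~\ref{thm:main} by specialising to a monomial basis and then translating the convex-hull intersection \eqref{eq:convexHullsPolynomials} into coordinatewise equalities. First I would observe that in the polynomial case the basis functions $g_i(\x)=\x^{\e_i}$, $|\e_i|\leq m$, are the \emph{same} functions at every point, so the generating sets are
$$G^+(\A)=\left\{(1,g_1(\x),\dots,g_n(\x))^T:\x\in E^+(\A)\right\},\qquad G^-(\A)=\left\{(1,g_1(\x),\dots,g_n(\x))^T:\x\in E^-(\A)\right\},$$
and by Theorem~\ref{thm:main} (in the form \eqref{eq:opt_main2}, equivalently \eqref{eq:convexHullsPolynomials}) the coefficient vector $\A^*$ is optimal for problem~\eqref{eq:obj_fun_con} if and only if $\co\left\{G^+(\A^*)\right\}\cap\co\left\{G^-(\A^*)\right\}\neq\emptyset$.

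Next I would unpack this non-emptiness. A vector belongs to $\co\left\{G^+(\A^*)\right\}\cap\co\left\{G^-(\A^*)\right\}$ exactly when there exist non-negative numbers $\alpha_\x$, $\x\in E^+(\A^*)\cup E^-(\A^*)$, satisfying $\sum_{\x\in E^+(\A^*)}\alpha_\x=\sum_{\x\in E^-(\A^*)}\alpha_\x=1$ and
$$\sum_{\x\in E^+(\A^*)}\alpha_\x\,(1,g_1(\x),\dots,g_n(\x))^T=\sum_{\x\in E^-(\A^*)}\alpha_\x\,(1,g_1(\x),\dots,g_n(\x))^T.$$
Comparing the two sides coordinate by coordinate, the leading coordinate merely reproduces $1=1$ (equivalently the normalisation), while the coordinate carrying a monomial $\x^{\e}$ with $|\e|\leq m$ yields exactly \eqref{eq:monom} (for $|\e|=0$ it again just restates the normalisation). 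Conversely, any family $\{\alpha_\x\}$ obeying the hypotheses of the theorem reassembles into such a common vector, so the two convex hulls meet. Hence \eqref{eq:monom} is equivalent to $\co\left\{G^+(\A^*)\right\}\cap\co\left\{G^-(\A^*)\right\}\neq\emptyset$, and therefore to optimality.

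Then I would justify the clause ``at least one positive coefficient in each set''. This is precisely the content of the bound $0<k<n+2$ from the proof of Theorem~\ref{thm:main}: because the leading coordinate of every generator equals $1$ (never $0$), a common point of the two convex hulls cannot be produced using points of a single sign, so necessarily some $\alpha_\x>0$ with $\x\in E^+(\A^*)$ and some $\alpha_\x>0$ with $\x\in E^-(\A^*)$. Finally, applying Lemma~\ref{lem:monomial} with $\delta$ equal to the smallest coordinate value occurring among all the extreme points shows that \eqref{eq:monom} holds for the given points if and only if it holds after translating every coordinate by $-\delta$, which justifies the remark preceding the statement that, without loss of generality, all coordinates involved may be taken non-negative.

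I do not expect a substantive obstacle: the result is in essence a reformulation of Theorem~\ref{thm:main}. The only point that needs care is the bookkeeping around the leading constant coordinate --- it is what forces the convex combination to split into weights $\tfrac12$ and $\tfrac12$, so that a genuine intersection condition (rather than a weaker one-sided one) is obtained, and it is also what makes the constraint $\sum\alpha_\x=1$ automatic instead of an extra hypothesis --- together with the harmless observation that this coordinate contributes no equation beyond that normalisation.
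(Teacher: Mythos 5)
Your proposal is correct and follows essentially the same route as the paper, which presents Theorem~\ref{thm:pol} as a direct restatement of Theorem~\ref{thm:main} (in the form \eqref{eq:opt_main2}/\eqref{eq:convexHullsPolynomials}) with the monomial basis, unpacked coordinate by coordinate; your bookkeeping of the leading coordinate, the normalisation, and the ``one positive coefficient in each set'' clause matches the argument already contained in the proof of Theorem~\ref{thm:main}.
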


Note that any monomial $\x^{\e}$ of degree $m\geq 1,$  can be presented as a product of  a lower degree monomial and the $i$-th coordinate~$x_i$ of $\x$. Therefore, Theorems~\ref{thm:main}~and~\ref{thm:pol} can be also formulated as follows.
\begin{theorem}\label{thm:pol1}
A polynomial of degree $m$ is  a best polynomial approximation if and only if there exist non-negative coefficients $\alpha_\x$, $\x\in E^+(\A)\cup E^-(\A)$ (with at least one positive coefficient in each set) such that
$$\sum_{\x\in E^+(\A)}\alpha_{\x}=\sum_{\x\in E^-(\A)}\alpha_{\x}=1,$$
such that for any monomial~$\x^{\e}$ of degree at most $m-1$ and every index $i = 1,\ldots,n_m$, the following equality holds
\begin{equation}\label{eq:monom1}
\sum_{\x\in E^+(\A)}\alpha_{\x}\x^{\e}x_i=\sum_{\x\in E^-(\A)}\alpha_{\x}\x^{\e}x_i.
\end{equation}
\end{theorem}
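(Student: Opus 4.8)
The plan is to observe that Theorem~\ref{thm:pol1} merely repackages Theorem~\ref{thm:pol}: for a \emph{fixed} family of non-negative coefficients $\alpha_{\x}$ satisfying the normalisation $\sum_{\x\in E^+(\A)}\alpha_{\x}=\sum_{\x\in E^-(\A)}\alpha_{\x}=1$, I would show that the system~\eqref{eq:monom} (one equation per monomial of degree at most $m$) and the system~\eqref{eq:monom1} (one equation per product of a monomial of degree at most $m-1$ with a coordinate) have exactly the same solutions. Once this equivalence of the two constraint systems is in hand, the ``if and only if'' of Theorem~\ref{thm:pol1} is immediate from Theorem~\ref{thm:pol}, since the existence of an admissible $\alpha$ for one system is then the same as for the other.

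First I would dispatch the implication \eqref{eq:monom}$\Rightarrow$\eqref{eq:monom1}. Given any exponent $\e$ with $|\e|\le m-1$ and any coordinate index $i$, the product $\x^{\e}x_i$ is the monomial $\x^{\tilde\e}$ obtained by increasing the $i$-th component of $\e$ by one, and $|\tilde\e|=|\e|+1\le m$ (the first monomial property recalled before Lemma~\ref{lem:monomial}). Hence \eqref{eq:monom} applied to the exponent $\tilde\e$ is literally \eqref{eq:monom1}. So this direction costs nothing.

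For the converse, \eqref{eq:monom1}$\Rightarrow$\eqref{eq:monom}, I would argue monomial by monomial. Fix $\x^{\e}$ with $|\e|\le m$. If $|\e|=0$, \eqref{eq:monom} reduces to $\sum_{\x\in E^+(\A)}\alpha_{\x}=\sum_{\x\in E^-(\A)}\alpha_{\x}$, which is the assumed normalisation. If $1\le|\e|\le m$, choose an index $i$ with $e_i\ge 1$ (possible since $|\e|\ge 1$ and all $e_j\in\N$); then $\x^{\e}=\x^{\e'}x_i$, where $\e'$ is $\e$ with its $i$-th component decreased by one, so $|\e'|=|\e|-1\le m-1$ — this is exactly the observation, stated just before the theorem, that any monomial of degree $\ge 1$ is a lower-degree monomial times some coordinate. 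Applying \eqref{eq:monom1} to this $\e'$ and this $i$ yields \eqref{eq:monom} for $\x^{\e}$. As $\e$ was arbitrary, the full system~\eqref{eq:monom} of Theorem~\ref{thm:pol} holds for the same $\alpha$, so $\A$ is a best polynomial approximation.

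I do not anticipate a real obstacle: the argument is elementary and linear. The only thing that needs care is the degree bookkeeping at the two ends of the range — confirming that every monomial of degree between $1$ and $m$ is genuinely realised as (monomial of degree $\le m-1$)$\cdot x_i$, which is precisely the cited monomial property, and that the degree-zero monomial is handled separately by the normalisation of the $\alpha_{\x}$. I would also remark that, unlike the passage preceding Theorem~\ref{thm:pol}, Lemma~\ref{lem:monomial} is not needed here; it would only be invoked if one additionally wished to translate coordinates so that all $x_i\ge 0$.
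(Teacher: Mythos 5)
Your proof is correct and follows essentially the same route as the paper, which justifies Theorem~\ref{thm:pol1} as an immediate reformulation of Theorem~\ref{thm:pol} via the observation that every monomial of degree between $1$ and $m$ factors as a monomial of degree at most $m-1$ times a coordinate $x_i$ (with the degree-zero case covered by the normalisation of the $\alpha_{\x}$). Your explicit treatment of both directions and of the degree-bookkeeping at the endpoints simply fills in details the paper leaves implicit.
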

Theorem~\ref{thm:pol1} provides a characterisation in terms of monomials of degree one less than in Theorem~\ref{thm:pol}. Note that the linear combination may not be a convex combination, since some of the coefficients  may be negative. One can make these coefficients non-negative by applying Lemma~\ref{lem:monomial}. This can be achieved in a number of ways. For example, for a monomial 
$\x^{\e}$ of degree at most $m-1$ apply Lemma~\ref{lem:monomial}, where $\delta$ is chosen as follows  
\begin{equation}\label{eq:delta_min}
\delta=\min_{j=1,\dots, d, e_j>0}x_j
\end{equation}
or 
\begin{equation}\label{eq:delta_max}
\delta=-\max_{j=1,\dots, d, e_j>0}x_j.
\end{equation}

Necessary and sufficient optimality conditions formulated in Theorems~\ref{thm:main}-\ref{thm:pol1} are not very easy to verify when $n$ is very large. In particular, $n_m$ increases very fast when the polynomial degree is increasing, especially in the case of multivariate polynomials. In the next section we develop a necessary optimality condition that is more practical.

\section{Relation with existing multivariate results}\label{seq:relation_with_existing_multivariate}
In~\cite{rice63} Rice gives necessary and sufficient optimality conditions for multivariate approximation. These results are obtained for a very general class of functions, not necessary polynomials. These conditions are fundamentally important, however, it is not very easy to verify them (even in the case of polynomials). Also their relation with the notion of alternating sequence is not very clear. Before formulating Rice's optimality conditions, we need to introduce the following notation and definitions (\cite{rice63}).

Recall that the set of extremal (maximal deviation points) $E$ is divided into two parts as follows:
$$E^+=\{\x|\x\in E, f(\x)-L(\A^*,\x)\geq 0\},$$
$$E^-=\{\x|\x\in E, f(\x)-L(\A^*,\x)\leq 0\},$$
where $\A^*$ is a vector of the parameters and $L(\A^*,\x)$ is the corresponding approximation, defined as in~(\ref{eq:model_function}).
The elements of $E^+$ and $E^-$ are positive and negative extremal points.

\begin{definition}
The point sets $E^+$ and $E^-$ are said to be isolable if there is an $\A,$ such that
$$L(\A,\x)>0~\x\in E^+,\quad L(\A,\x)<0~\x\in E^-.$$
\end{definition}
\begin{definition}
$\Gamma(\A)$ is is called an isolating curve if
$$\Gamma(\A)=\{\x| L(\A,\x)=0\}.$$
\end{definition}
Therefore, the sets $E^+$ and $E^-$ are isolable, if they lie on opposite sides of an isolating curve $\Gamma(\A)$.
\begin{definition}
A subset of extremal points is called a critical point set if its positive and negative parts $E^+$ and $E^-$ are not isolable, but if any point is deleted then $E^+$ and $E^-$ are isolable.
\end{definition}
Rice formulated his necessary and sufficient optimality conditions as follows.
\begin{theorem}(Rice~\cite{rice63}) $L(A^*,\x)$ is a best approximation to $f(\x)$ if and only if the set of extremal points of $L(\A^*,\x)-f(\x)$ contains a critical point set.
\end{theorem}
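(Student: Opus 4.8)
The plan is to show that the statement is essentially equivalent to Theorem~\ref{thm:main}, the only substantive work being the translation of the geometric ``convex hulls intersect'' condition into Rice's language of isolability and critical point sets. First I would establish the key dictionary: a pair of finite sets $E^+, E^-$ is \emph{isolable} (by a function from the span of $1, g_1,\dots,g_n$) if and only if the convex hulls $\co\{G^+\}$ and $\co\{-G^-\}$ do \emph{not} intersect, where $G^\pm$ are the lifted vectors $(1,g_1(\x),\dots,g_n(\x))^T$ over $E^\pm$. This is a direct application of a separating-hyperplane argument: a hyperplane through the origin strictly separating $\co\{G^+\}$ from $\co\{-G^-\}$ is given by a normal vector $\A=(a_0,a_1,\dots,a_n)$, and the separation inequalities $\langle \A, h\rangle > 0$ for $h\in G^+$, $\langle \A, h\rangle < 0$ for $h\in -G^-$ are exactly $L(\A,\x)>0$ on $E^+$ and $L(\A,\x)<0$ on $E^-$. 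Conversely, non-intersection of two compact convex sets yields such a separating hyperplane. (A small point to handle: whether one needs strict separation of the hulls versus merely disjointness; since both hulls are polytopes, disjointness already gives strict separation, so the two notions coincide here.)

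With this dictionary in hand, I would argue both directions. For \emph{necessity}, suppose $L(\A^*,\x)$ is a best approximation. By Theorem~\ref{thm:main}, $\co\{G^+(\A^*)\}\cap\co\{-G^-(\A^*)\}\neq\emptyset$, so by the dictionary $E^+$ and $E^-$ are not isolable. By Carath\'eodory's theorem the intersection point can be realised using at most $n+2$ of the lifted vectors; let $S\subseteq E^+\cup E^-$ be a minimal subset of extremal points whose positive and negative parts are not isolable. Minimality means that deleting any point of $S$ makes the two parts isolable, which is exactly the definition of a critical point set; hence $E$ contains a critical point set. For \emph{sufficiency}, suppose $E$ contains a critical point set $S$. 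Then the positive and negative parts of $S$ are not isolable, so by the dictionary the corresponding convex hulls of lifted vectors intersect; since $S\subseteq E$, the full convex hulls $\co\{G^+(\A^*)\}$ and $\co\{-G^-(\A^*)\}$ intersect as well. Theorem~\ref{thm:main} then gives that $\A^*$ is optimal.

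The main obstacle is bookkeeping rather than conceptual: one must be careful that the ambient function class used for ``isolability'' is exactly $\sspan\{1,g_1,\dots,g_n\}$ (in Rice's polynomial setting, polynomials of degree $\le m$), so that the separating hyperplane's normal vector corresponds to an admissible $\A$; and one must verify that ``minimal non-isolable subset'' and ``critical point set'' genuinely coincide, i.e.\ that a critical point set is automatically minimal and vice versa. A secondary subtlety is that Rice's definition uses the strict inequalities $L>0$, $L<0$ whereas the best-approximation sets $E^+,E^-$ are defined with non-strict sign conditions and may overlap (a point with zero deviation lies in both); one should check that points in $E^+\cap E^-$ do not cause trouble, which they do not, since such a point can be removed from one of the two sets without affecting either the optimality characterisation or the separation argument. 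Once these points are pinned down, the proof is a short chain of equivalences anchored on Theorem~\ref{thm:main} and the separating-hyperplane theorem.
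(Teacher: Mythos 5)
The paper itself does not prove this statement: it is Rice's theorem, imported from \cite{rice63}, and the only thing the paper offers is the informal discussion in Section~\ref{seq:relation_with_existing_multivariate} arguing that its own condition (Theorem~\ref{thm:main}) is equivalent to Rice's. Your proposal is essentially a formalisation of exactly that discussion: the dictionary ``isolable $\Leftrightarrow$ the lifted convex hulls are disjoint'' via strict separation, plus ``minimal non-isolable subset $=$ critical point set'' via Carath\'eodory and inclusion-minimality. This is not circular, since Theorem~\ref{thm:main} is proved independently by subdifferential calculus, and it is a genuinely different (and shorter) route than Rice's original direct approximation-theoretic argument; what it buys is precisely the point the paper wants to make in Section~\ref{seq:relation_with_existing_multivariate}. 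Your handling of the minimality issue is also right: for necessity you only need ``inclusion-minimal non-isolable $\Rightarrow$ critical'', and for sufficiency only ``critical $\Rightarrow$ non-isolable'', so the two notions need not coincide exactly.

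One bookkeeping slip should be fixed, because as written the dictionary is literally false. If $G^{\pm}$ denote the lifted vectors $(1,g_1(\x),\dots,g_n(\x))^T$, then every point of $\co\{G^+\}$ has first coordinate $+1$ and every point of $\co\{-G^-\}$ has first coordinate $-1$, so these two sets are \emph{always} disjoint; moreover the inequality $\langle \A,h\rangle<0$ for $h\in -G^-$ unwinds to $L(\A,\x)>0$ on $E^-$, not $L(\A,\x)<0$. The correct statement of the dictionary is: $E^+,E^-$ are isolable if and only if $\zero\notin\co\{G^+\cup(-G^-)\}$, equivalently (using the fixed first coordinates, as in the proof of Theorem~\ref{thm:main}) if and only if $\co\{G^+\}\cap\co\{G^-\}=\emptyset$, i.e.\ the hulls of $(1,g_1(\x),\dots,g_n(\x))^T$ over $E^+$ and over $E^-$ are disjoint. (Your confusion mirrors a typo in the paper: the displayed condition in Theorem~\ref{thm:main} writes $\co\{G^+(\A^*)\}\cap\co\{-G^-(\A^*)\}\neq\emptyset$, while its proof actually establishes the intersection of the two hulls with first coordinate $+1$.) With that correction, and noting that $E^{\pm}$ are compact so the hulls are compact and disjointness does give strict separation even when $Q$ is a hyperbox rather than a finite grid, your argument goes through; for sufficiency it is cleanest to argue directly that disjointness of the \emph{full} hulls would isolate any subset, contradicting non-isolability of the critical point set.
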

Note that $L(\A,\x)$ is linear with respect to $\A$ (due to~(\ref{eq:model_function})). Then $\Gamma (\A)$ can be interpreted as a linear function (hyperplane). If two convex sets (convex hulls of positive and negative points) are not intersecting, then there is a separating hyperplane, such that these two convex sets lie on opposite sides of this hyperplane.

Note that in our necessary and sufficient optimality conditions we only consider finite subsets of $E^+$ and $E^-$, namely, we only consider the set of at most $n+2$ points from the corresponding sundifferential that are used  to form zero on their convex hull. Generally, there are several ways to form zero, but if we choose the one with the minimal number of maximal deviation points, then, indeed, the removal of any of the extremal points will lead to a situation where zero can not be formed anymore and the corresponding subsets of positive and negative points are isolable (their convex hulls do not intersect).

Therefore, our necessary and sufficient optimality conditions are equivalent to Rice's conditions. The main advantages of our formulations are as follows. First of all, our condition is much simpler, easier to understand and connect with the classical theory of univariate Chebyshev approximation. Second, it is much easier to verify our optimality conditions, which is especially important for the construction of of a Remez-like algorithm, where necessary and sufficient optimality conditions need to be verified at each iteration.

\section{Alternating sequence generalisation}\label{sec:counterexample}

In this section we propose a generalisation of the notion of alternating sequence. That is, we introduce a necessary and sufficient condition for best approximation that can be verified in the domain of the function to approximate (of dimension $d<<n_m$), based on the geometrical position of the points of extreme deviation.

Consider a hyperplane $\HH(\mathbf{u},a) = \{\x\in \R^d : \langle {\bf u},\x\rangle - a=0\}$, separating the two half spaces
$\HH^+(\mathbf{u},a)= \{\x\in \R^d : \langle {\bf u},\x\rangle - a> 0\}$ and
$\HH^-(\mathbf{u},a) = \{\x\in \R^d : \langle {\bf u},\x\rangle - a< 0\}$. We define the following:
\begin{enumerate}
\item  $E^+(\mathbf{u},a) = (E^+(\A)\cap \HH^+(\mathbf{u},a))\cup(E^-(\A)\cap \HH^-(\mathbf{u},a))$ extreme deviation points whose deviation sign coincides with the sign of the half-space these extreme points belong to;  
\item  $E^-(\mathbf{u},a) = (E^-(\A)\cap \HH^+(\mathbf{u},a))\cup(E^+(\A)\cap \HH^-(\mathbf{u},a))$ extreme deviation points whose deviation sign is opposite to the sign of the half-space these extreme points belong to;
\item  $E^+_0(\mathbf{u},a) = E^-(\A)\cap \HH(\mathbf{u},a)$ extreme deviation points with positive deviation sign that belong to $\HH(\mathbf{u},a)$;  
\item  $E^-_0(\mathbf{u},a) = E^-(\A)\cap \HH(\mathbf{u},a)$ extreme deviation points with negative deviation sign that belong to $\HH(\mathbf{u},a)$.
\end{enumerate}

Then we have the following result.
\begin{theorem}
  \label{thm:hyperplaneSeparation}
	Condition~\eqref{eq:convexHullsPolynomials} is true if and only if for any \((\mathbf{u},a)\in \R^d\times \R\) at least one of the following conditions holds:
	\begin{enumerate}
	\item polynomial degree reduction:
	\[\{\M^{m-1}(\x): \x\in E^+(\mathbf{u},a)\} \cap \{\M^{m-1}(\x): x\in E^-(\mathbf{u},a)\} \neq \emptyset;\]
	\item point elimination:
	\[\{\M^{m}({\x}): \bar{\x}\in E^+_0(\mathbf{u},a)\} \cap \{\M^{m}(\x): x\in E^-_0(\mathbf{u},a)\} \neq \emptyset.\]
\end{enumerate}
%
\end{theorem}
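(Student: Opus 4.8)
The plan is to reformulate Condition~\eqref{eq:convexHullsPolynomials} — the intersection of the two polytopes $\co\{\M^m(\x):\x\in E^+\}$ and $\co\{\M^m(\x):\x\in E^-\}$ in $\R^{n_m}$ — and then to separate off the degree-$m$ coordinates from the degree-$\le m-1$ coordinates. Concretely, suppose the intersection is nonempty: there are convex weights $\alpha_\x$ on $E^+$ and $\beta_\x$ on $E^-$ with $\sum_{E^+}\alpha_\x\x^{\e}=\sum_{E^-}\beta_\x\x^{\e}$ for every monomial of degree $\le m$ (Theorem~\ref{thm:pol}). The key observation is that every top-degree monomial $\x^{\e}$, $|\e|=m$, factors as $\x^{\tilde\e}x_i$ with $|\tilde\e|=m-1$ (Property~1 of monomials). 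So the equality for degree-$m$ monomials is exactly the statement that the (signed) vector-valued identity $\sum_{E^+}\alpha_\x\,\M^{m-1}(\x)\,x_i=\sum_{E^-}\beta_\x\,\M^{m-1}(\x)\,x_i$ holds for each $i$ — i.e., the ``moment'' of the $\M^{m-1}$-point configuration against each coordinate $x_i$ is matched. This is the bridge between the $\M^m$ statement and $\M^{m-1}$-statements weighted by linear functionals.

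Given an arbitrary $(\mathbf u,a)$, I would take the affine functional $\ell(\x)=\langle\mathbf u,\x\rangle-a$ and use the degree-$m$ matching together with the degree-$(m-1)$ matching (which follows a fortiori from Theorem~\ref{thm:pol}, and via Lemma~\ref{lem:monomial} survives the affine shift by $a$) to obtain
\[
\sum_{\x\in E^+}\alpha_\x\,\ell(\x)\,\M^{m-1}(\x)=\sum_{\x\in E^-}\beta_\x\,\ell(\x)\,\M^{m-1}(\x).
\]
Now split each sum according to the sign of $\ell(\x)$: points in $\HH^+$, points in $\HH^-$, and points on $\HH$ (where $\ell=0$, so they drop out of this identity). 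Moving the negative-$\ell$ contributions across the equality flips their sign, and by the very definitions of $E^+(\mathbf u,a)$ and $E^-(\mathbf u,a)$ the result is an identity of the form $\sum_{\x\in E^+(\mathbf u,a)}\gamma_\x|\ell(\x)|\M^{m-1}(\x)=\sum_{\x\in E^-(\mathbf u,a)}\gamma_\x|\ell(\x)|\M^{m-1}(\x)$ with nonnegative weights. If the total weight $\sum_{E^+(\mathbf u,a)}\gamma_\x|\ell(\x)|=\sum_{E^-(\mathbf u,a)}\gamma_\x|\ell(\x)|$ is strictly positive, normalising gives case~1 (degree reduction). If that total weight is zero, then all the mass of both original convex combinations sits on $\HH(\mathbf u,a)$, i.e.\ only on $E^+_0\cup E^-_0$; restricting the original degree-$m$ identity of Theorem~\ref{thm:pol} to these points gives case~2 (point elimination). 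That proves the forward direction.

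For the converse I would argue contrapositively: if Condition~\eqref{eq:convexHullsPolynomials} fails, the two polytopes in $\R^{n_m}$ are disjoint, so by the separating hyperplane theorem there is a linear functional on $\R^{n_m}$ — that is, a polynomial $p$ of degree $\le m$ — with $p>c$ on $\{\M^m(\x):\x\in E^+\}$ and $p<c$ on $\{\M^m(\x):\x\in E^-\}$ for some constant $c$. I would like to extract from $p-c$ its degree-$\le m$ structure and split it along its own ``leading'' behaviour: write $p-c=q\cdot\ell + r$ where $\ell$ is a well-chosen affine form (a hyperplane $(\mathbf u,a)$), $\deg q\le m-1$, and $r$ captures the part of $p-c$ vanishing on that hyperplane. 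The choice of $(\mathbf u,a)$ should be made so that on $E^+(\mathbf u,a)$ versus $E^-(\mathbf u,a)$ the term $q\,\ell$ has a definite sign pattern (hence $q$ strictly separates the $\M^{m-1}$-images, killing case~1), while on $\HH(\mathbf u,a)$ the residual $r$ — which there equals $p-c$ — strictly separates $E^+_0$ from $E^-_0$ in $\M^m$ (killing case~2). The main obstacle is precisely this last step: unlike the univariate case, a degree-$m$ polynomial in $\R^d$ need not factor through a single hyperplane, so one must argue that for \emph{some} hyperplane both separations are simultaneously inherited, rather than producing a single global factorisation. I expect this to require a compactness/continuity argument over the sphere of directions $\mathbf u$, choosing the hyperplane tangent to (or supporting) the separation geometry of $p-c$; getting the sign bookkeeping on $E^+_0,E^-_0$ to come out right is the delicate part, and it is where Lemma~\ref{lem:monomial} and Theorem~\ref{thm:pol1} (the degree-$(m-1)$ reformulation) will do the heavy lifting.
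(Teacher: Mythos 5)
Your forward direction is, in substance, the paper's own proof. The paper starts from the degree-$m$ moment identity of Theorem~\ref{thm:pol}, forms the combination $\sum_{i=1}^d u_i\times(\text{the }x_i\text{-weighted identities})-a\times(\text{the degree-}(m-1)\text{ identity})$ to obtain $\sum_{\x\in E^+}\alpha_\x(\langle\mathbf{u},\x\rangle-a)\M^{m-1}(\x)=\sum_{\x\in E^-}\alpha_\x(\langle\mathbf{u},\x\rangle-a)\M^{m-1}(\x)$, regroups the terms according to the sign of $\langle\mathbf{u},\x\rangle-a$, and normalises by the off-hyperplane masses $A^+$, $A^-$; if these vanish, all the weight sits on $\HH(\mathbf{u},a)$ and condition~2 follows. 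Your splitting, your observation that on-hyperplane points drop out, and your reading of the equality $A^+=A^-$ off the constant coordinate reproduce this argument exactly, and that half of your proposal is correct.

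The gap is in the converse. Your contrapositive sketch---separate the two degree-$m$ polytopes in $\R^{n_m}$ by a polynomial $p$ of degree at most $m$, then write $p-c=q\cdot\ell+r$ for a ``well-chosen'' affine form $\ell$ so that $q$ defeats condition~1 and $r$ defeats condition~2---is not a proof. A multivariate separating polynomial need not factor through any hyperplane; the existence of a single pair $(\mathbf{u},a)$ for which \emph{both} failures occur simultaneously is precisely the content that has to be established, and your compactness/continuity argument over the sphere of directions is only announced, not carried out. Moreover, to contradict condition~1 you need $q$ to separate the \emph{convex hulls} of the $\M^{m-1}$ images of $E^+(\mathbf{u},a)$ and $E^-(\mathbf{u},a)$, i.e.\ a uniform sign pattern with a gap, not merely pointwise sign information, and the sign bookkeeping on $E^+_0$, $E^-_0$ is likewise unresolved. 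For comparison, the paper's proof of Theorem~\ref{thm:hyperplaneSeparation} stops after the forward implication and supplies no argument for the converse at all, so you are not overlooking a device the paper provides; but as written your proposal does not close that direction either, and it should be flagged as open rather than presented as a proof outline that merely needs polishing.
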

\begin{proof}

The first condition of this theorem allows one to reduce the degree of the polynomials at each iteration and therefore reduce the dimension of the space for Condition~(\ref{eq:convexHullsPolynomials}) to verify. The second condition reduces the number of extreme points that form the corresponding polytopes.

  It is clear that the system:
  \[ \sum_{\x\in E^+} \alpha_\x \M^{m}(\x) = \sum_{\x\in E^-} \alpha_\x \M^{m}(\x) \tag{\theequation(0)} \]
  is equivalent to:
  \begin{align}\refstepcounter{equation}
    \sum_{\x\in E^+} \alpha_\x \M^{m-1}(\x) &= \sum_{\x\in E^-} \alpha_\x \M^{m-1}(\x) \tag{\theequation(0)}\label{eq:n-10}\\
    \sum_{\x\in E^+} \alpha_\x x_i \M^{m-1}(\x) &= \sum_{\x\in E^-} \alpha_\x x_i \M^{m-1}(\x)  & i=1,\ldots,d. \tag{\theequation(i)}\label{eq:n-1i}.
  \end{align}
  
  If the corresponding $\alpha_\x$ are zeros, then the corresponding extreme points can be removed from further consideration and the corresponding polytopes are still intersecting. 

  Consider any vector \({\bf u}\in \R^d\) and scalar \(a\in \R\).  Then implement the following equation operations: \[\sum_{i=1}^d u_i\times \eqref{eq:n-1i} - a\times \eqref{eq:n-10}\] 
  and obtain
  \[
    \sum_{\x\in E^+} \alpha_\x (\langle {\bf u},\x\rangle -a )\M^{m-1}(\x) = \sum_{\x\in E^-} \alpha_\x (\langle {\bf u},\x\rangle -a )\M^{m-1}(x).
  \]
  Define 
  \begin{align*}
    A^+ &= \sum_{\x\in E^+(\mathbf{u},a)}\alpha_\x|\langle {\bf u},\x\rangle -a |,\\
    A^- &= \sum_{\x\in E^-(\mathbf{u},a)}\alpha_\x|\langle {\bf u},\x\rangle -a |.
  \end{align*}
Constants $A^+$ and $A^-$ may only be zero if all the corresponding extreme points with non-zero convex coefficients belong to~$\HH(\mathbf{u},a)$ (condition~2 of this theorem).
  Consider the situation where this is not the case and therefore $A^+>0$ and $A^->0$. Define
  \begin{align*}
    \hat{\alpha}_\x &= \frac{\alpha_\x }{A^+} & \text{if } \x\in E^+(\mathbf{u},a),\\
    \hat{\alpha}_\x &= \frac{\alpha_\x }{A^-} & \text{if } \x\in E^-(\mathbf{u},a).
  \end{align*}
  Then, we have
  \begin{equation}
    \sum_{\x\in E^+(\mathbf{u},a)} \hat{\alpha}_\x \M^{m-1}(x) =\sum_{\x\in E^(\mathbf{u},a)} \hat{\alpha}_\x \M^{m-1}(\x) \label{eq:onelessdegree}
  \end{equation}
  \end{proof}

Note the following.
\begin{enumerate}
\item
Formula~\eqref{eq:onelessdegree} is similar to Formula~\eqref{eq:monom} with degree \(m-1\). Thus, the above result means that if one runs any hyperplane and inverts the signs on one side of this hyperplane, the formula holds for degree \(m-1\). Since there are a finite number of ways to split extreme points into two sets, this condition can be verified in a finite number of steps. 
\item The second condition of Theorem~\ref{thm:hyperplaneSeparation} also means that the dimension of the domain $d$ is reduced to $d-1$.
\end{enumerate}

Define by \(\Comb_d(E)\) the set of all possible subsets of \(E\) of cardinality~$d$.
 Any $d$ point can be placed in a hyperplane. If this hyperplane is not unique (that is, points are not in general position), then  at least one more point can be added to the system and they still can be placed on one hyperplane. Assume now that $k$ points ($k>d$) define a unique hyperplane, then there exists a set of exactly $d$ points that define the same hyperplane.

The next result shows that it is not necessary to consider all possible hyperplanes, but that it is sufficient to consider those that contain at least \(d\) extreme points. 

  \begin{theorem}
	  \label{thm:remove_points}
    Condition~\eqref{eq:convexHullsPolynomials} is true if and only if for any 
    \[C\in \Comb_d(E^+\cup E^-)\]
    that forms an affine independent system, 
     there exists a hyperplane \(\HH\) containing \(C\) such that 
	\[\{\M^{m-1}(\x): x\in E^+(\mathbf{u},a)\} \cap \{\M^{m-1}(\x): x\in E^-(\mathbf{u},a)\} \neq \emptyset.\]
  \end{theorem}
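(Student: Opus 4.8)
The plan is to use Theorem~\ref{thm:hyperplaneSeparation} as the bridge: that theorem already characterises Condition~\eqref{eq:convexHullsPolynomials} through the behaviour of \emph{every} hyperplane, so all that remains is to show one may restrict attention to the much smaller family of hyperplanes spanned by $d$ affinely independent extreme points. The geometric fact making this possible is the one noted just before the statement --- any hyperplane can be moved, while keeping a prescribed set of extreme points on it, until it either contains $d$ affinely independent extreme points or has swept up all of them --- and this is what will convert ``for all hyperplanes'' into ``for all affine independent $d$-subsets $C$''.

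For the forward implication I would fix a certificate for Condition~\eqref{eq:convexHullsPolynomials}, i.e. nonnegative weights $\alpha_\x$ on $E^+\cup E^-$ summing to $1$ over each part with $\sum_{\x\in E^+}\alpha_\x\M^m(\x)=\sum_{\x\in E^-}\alpha_\x\M^m(\x)$. Given an affine independent $C\in\Comb_d(E^+\cup E^-)$, let $\HH(\mathbf{u},a)$ be the unique hyperplane through $C$ and repeat the linear combination of the monomial identities carried out in the proof of Theorem~\ref{thm:hyperplaneSeparation}; this yields the degree-$(m-1)$ identity over $E^+(\mathbf{u},a)$ and $E^-(\mathbf{u},a)$ with weights $\alpha_\x|\langle\mathbf{u},\x\rangle-a|$, and evaluating at the constant monomial shows its two normalising constants $A^+$ and $A^-$ agree. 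If this common value is positive, dividing through certifies condition~1 for $\HH$. If it vanishes, every positively weighted extreme point lies on $\HH$, so the problem collapses onto $\HH\cong\R^{d-1}$; here I would either re-choose the certificate so that its support affinely spans $\R^d$ (by averaging finitely many Carath\'eodory certificates), or, when no certificate has full-dimensional support, recurse on $d$, the base case $d=1$ reducing to the classical alternation argument.

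For the converse, assuming that for every affine independent $C$ some hyperplane through $C$ satisfies condition~1, it suffices by Theorem~\ref{thm:hyperplaneSeparation} to verify condition~1 or condition~2 for an arbitrary hyperplane $\HH(\mathbf{u},a)$. If $\HH$ already carries $d$ affinely independent extreme points it coincides with the hyperplane of some such $C$ and condition~1 holds by hypothesis. Otherwise the extreme points on $\HH$ span an affine flat $F$ of dimension $\le d-2$; since rotating $\HH$ about $F$ sweeps all of $\R^d\setminus F$, I would rotate --- in a direction chosen so that no extreme point crosses from $\HH^+$ to $\HH^-$ --- until a new extreme point lands on the hyperplane, strictly increasing $\dim F$. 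After finitely many such steps either $\dim F=d-1$ (the previous case) or all extreme points lie on the hyperplane, which is exactly condition~2. Either way the dichotomy of Theorem~\ref{thm:hyperplaneSeparation} is met.

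I expect the main obstacle to be the bookkeeping of the sign partition $E^+(\mathbf{u},a)$ versus $E^-(\mathbf{u},a)$ as the hyperplane is moved: one must ensure that adjusting the hyperplane to absorb an extra extreme point never silently transfers another point across it and destroys condition~1. Controlling this --- together with the genuinely lower-dimensional degenerate case in which all relevant extreme points are coplanar --- is where affine independence of $C$ and finiteness of $E$ have to be used carefully, and it is the part of the argument that needs the most attention.
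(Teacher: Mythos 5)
Your argument follows essentially the same route as the paper: the forward direction is obtained by re-running the linear-combination computation from the proof of Theorem~\ref{thm:hyperplaneSeparation} on the unique hyperplane through $C$ (the paper simply declares this direction a direct corollary of that theorem), and the converse is the same rotation argument --- rotate an arbitrary hyperplane about the affine span of the extreme points lying on it, stopping each time a new extreme point is reached so that no point changes side, until the hyperplane contains $d$ affinely independent extreme points, whereupon the hypothesis applies and the intersection for the original $(\mathbf{u},a)$ follows by inclusion of the corresponding sets. The degenerate branches you single out (all positively weighted points lying on the hyperplane in the forward direction, or all extreme points being absorbed before an affine span of dimension $d-1$ is reached in the converse) are exactly the cases the paper's proof passes over in silence, so apart from your admittedly tentative patches there (the ``condition~2'' claim for the rotated hyperplane and the recursion on $d$), your proposal matches the published proof.
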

  \begin{proof}
    The necessity is a direct corollary of Theorem~\ref{thm:hyperplaneSeparation}. Assume now that the above condition holds and consider an arbitrary pair \((\mathbf{u},a)\in \R^d\times \R\). Assume without loss of generality that \(\|u\|=1\).
    
    Assume that a hyperplane contains extreme points whose affine space is of dimension $k<d-1$. Then one can rotate this hyperplane around the affine sub-space till we reach more extreme points. This rotation does not affect the membership in $E^+(\mathbf{u},a)$ or $E^-(\mathbf{u},a)$ that do not belong to the new hyperplane (after rotation). The points that belong to this hyperplane are removed from further consideration. Continue this process till the dimension the affine span is $d-1$. Then, if the final $\bar{E}^+(\mathbf{u},a)$ and $\bar{E}^-(\mathbf{u},a)$ (obtained after several iterations of rotation) are intersecting then so are the original ${E}^+(\mathbf{u},a)$ and ${E}^-(\mathbf{u},a)$. 
    
    \end{proof}

  This implies the following corollaries:

  \begin{enumerate}
    \item Since any \(k\leq d\) points define (not necessarily uniquely) a hyperplane, by setting the pair \(({\bf u},a)\) to be defining this hyperplane, the result for degree \(m-1\) applies, after setting the signs accordingly.
    \item Theorem~\ref{thm:remove_points} can be verified by checking at most
    $$\frac{n!}{d!(n-d)!}$$
    hyperplanes. In particular, if a set of $d$ points does not define a unique hyperplane, then this set can be excluded.
    \item If one chooses these \(k\) points as the vertices defining a \(k-1\)-face of the polytope \(P=\co\{\x\in E\}\), then all remaining points lie on the same side of the hyperplane. Therefore, by removing any \(k\) face (and facets in particular) of the polytope \(P\), the result holds for degree \(m-1\).
    \item\label{item:removefacets} By iteratively removing any \(m-1\) facets as described above, the remaining polytopes \(P^+\) and \(P^-\) must intersect (that is, the result holds for degree 1).
    \item\label{item:removepoints} Similarly, it is possible to remove any \((m-1)d\) points and, updating the signs accordingly, the remaining polytopes \(P^+\) and \(P^-\) must intersect.
  \end{enumerate}

  \begin{remark}
      It is easy to verify that Condition~\ref{item:removefacets} is exactly equivalent to the alternating sequence criterion in the univariate case. Indeed, after removing \(m-1\) points (which are also facets in the univariate case), there needs to remain at least 3 alternating points to ensure intersection of the remaining polytopes. This means that there must be at least \(m+2=m-1+3\) points, which can be shown to alternate by removing the adequate \(m-1\) points.

      Similarly, Condition~\ref{item:removefacets} is trivial for degree \(m=1\) polynomial approximation, since all these conditions are equivalent to the intersection between \(E^+\) and \(E^-\) being nonempty.
  \end{remark}

Theorem~\ref{thm:remove_points} provides a generalisation of the notion of alternating sequence in the multivariate settings. Indeed, a set of points alternates $n$ times when all subsets obtained after removing $d$ (linearly independent) points alternate $n-1$ times. In the univariate case this means that a set of points alternates $n$ times when any subset obtained after removing 1 point alternates $n-1$ times (with the signs updated accordingly).

\section{A fast algorithm for necessity verification for multivariate polynomial approximation}\label{sec:algorithm}
Assume that our polynomial approximation is optimal and consider any monomial \(\x^{\e}\) of degree $m-1$.
Then all the monomials of degree $m$ can be obtained by multiplying one of the monomials of degree $m-1$ by one of the coordinates of $\x=(x_1,\dots,x_d)^T.$ Therefore, there exist positive coefficients (we remove zero coefficients for simplicity) 
$\alpha_{\x},~\x\in E^+(\A)\cup E^-(\A)$  such that
 \begin{equation}
 \sum_{\x\in E^+(\A)}\alpha_{\x}\x^{\e}(1,\x)^T=\sum_{\x\in E^-(\A)}\alpha_{\x}\x^{\e}(1,\x)^T
 \end{equation}
Assume that there exists $j,$ such that $e_j>0.$ Consider 
$$\delta^j_{min}=\min\{x_j: \x \in E^+(\A)\cup E^-(\A)\} $$  and
$$\delta^j_{max}=\max\{x_j: \x \in E^+(\A)\cup E^-(\A)\}$$
where $x_j$ is the $j$-th coordinate of the point $\x$. 

Apply Lemma~\ref{lem:monomial} with $\delta=\delta^j_{min}$ or $\delta=\delta^j_{max}$ and remove all the maximal deviation points with the minimal (maximal) value for the $j$-th coordinate (there may be more than one point), since the corresponding monomial is zero.  At the end of this process, the convex hulls of the remaining maximal deviation points should intersect or all the maximal deviation points are removed.

%

The following algorithm can be used to verify this necessary optimality condition. 

\begin{center}
Algorithm 
\end{center}
\begin{enumerate}
\item[Step 1:] {\bf Separate positive and negative maximal deviation points} Identify the sets $E^+(\A)$ and $E^-(\A)$ that correspond to positive and negative deviation.
\item[Step 2:] {\bf Identify minimal (maximal) coordinate value for each dimension} For each dimension $k:$ $k=1,\dots,d$ identify
$$\delta^j_{min}=\min\{x_j: \x \in E^+(\A)\cup E^-(\A)\} \text{ or }
\delta^j_{max}=\max\{x_j: \x \in E^+(\A)\cup E^-(\A)\}$$
\item[Step 3:] {\bf Coordinate transformation} Apply the following coordinate transformation (to transform the coordinates of the maximal deviation points to non-negative numbers):
$$\tilde{\x}_j=\x_j-\delta$$
\item[Step 4:] {\bf Points reduction}
Remove  maximal deviation points  whose  updated coordinates have a zero at the corresponding  coordinate and assign $m_{new}=m-1$.

If $m_{new}>1$ and the remaining sets of maximal deviation (positive and negative) are non-empty GO TO Step 1 for the corresponding lower degree polynomials approximation optimality verification, $m=m_{new}$).

Otherwise GO TO the final step of the algorithm.
\item[Step 5:] {\bf Optimality verification} If the remaining maximal deviation sets are non-empty and the conditions of Theorem~\ref{thm:main_lin}  are not  satisfied then the original polynomials is not optimal.
\end{enumerate}

There are two main advantages of this procedure.
\begin{enumerate}
\item It demonstrates how the concept of alternating sequence can be generalised  to the case of multivariate functions.
\item It is based on the verification whether two convex sets are intersecting or not, but since $d\leq n$ it is much easier to verify it after applying the algorithm.
\end{enumerate}


Note that Theorem~\ref{thm:main} can be also applied to verify optimality (necessary and sufficient optimality condition). In this case one needs to check if two convex sets are intersecting  in $\R^{n}.$ The above algorithm requires
to check if two convex sets intersect  in $\R^{d}$ (considerably lower dimension), however, it only verifies the necessity. 
In the case of univariate approximation, this algorithm verify both necessity and sufficiency if applied for both 
$$\delta^j_{min}=\min\{x_j: \x \in E^+(\A)\cup E^-(\A)\} \text{ and }
\delta^j_{max}=\max\{x_j: \x \in E^+(\A)\cup E^-(\A)\}.$$
Indeed, assume that the polynomial degree is $m$. By removing one of the alternating sequence points (smallest or largest) one obtains a shorter alternating sequence. If this sequence verifies necessary and sufficient optimality conditions for a polynomial of degree $m-1$ then the obtained polynomial approximation is optimal (for degree~$m$).

\section{Acknowledgements}

This paper was inspired by the discussions during a recent
MATRIX program ``Approximation and Optimisation\rq{}\rq{}  that took place in July 2016, Creswick, Australia. We are thankful to
the MATRIX organisers,  support team and participants for a terrific research atmosphere and productive discussions.
 \section{Conclusions and further research directions}\label{sec:conclusions}
In this paper we obtained necessary and sufficient optimality conditions for best polynomial Chebyshev approximation (characterisation theorem). The main obstacle was to modify the notion of alternating sequence to the case of multivatriate polynomials. This has been done using nonsmooth calculus. We also propose an algorithm for optimality verification (necessity).

For the future we are planning to proceed in the following directions.
\begin{enumerate}
\item Find a necessary and sufficient optimality condition that is easy to verify in practice.
\item Investigate the geometry of optimal solutions, in particular, when the optimal solution is unique.
\item Develop an approximation algorithm to construct best multivariate approximations (similar to the famous Remez algorithm~\cite{remez57} and de la Vall\'{e}e-Poussin procedure~\cite{valleepoussin:1911}, developed for univariate polynomials and extended to  polynomial splines~\cite{nurnberger,sukhorukovaalgorithmfixed}). 
\end{enumerate}

    \bibliographystyle{amsplain}


\begin{thebibliography}{10}

\bibitem{cgal}
\emph{The computational geometry algorithms library}.

\bibitem{chebyshev}
P.L Chebyshev, \emph{The theory of mechanisms known as parallelograms},
  Selected Works, Publishing Hours of the USSR Academy of Sciences, Moscow,
  1955, (In Russian), pp.~611--648.

\bibitem{Nurn_Davydov98multivar_interpolation}
O.~V. Davydov, G.~Nurnberger, and F.~Zeilfelder, \emph{Approximation order of
  bivariate spline interpolation for arbitrary smoothness}, 1998.

\bibitem{valleepoussin:1911}
Charles~Jean de~la Vall{\'e}e~Poussin, \emph{Sur la m{\'e}thode de
  l'approximation minimum}, Annales de la Soci{\'e}t{\'e} Scientifique de
  Bruxelles \textbf{35} (1911), 1--16.

\bibitem{ioffeTikhomirov}
Aleksandr~Davidovich Ioffe and Vladimir M. (Vladimir~Mikhailovich) Tikhomirov,
  \emph{Theory of extremal problems}, North-Holland Pub. Co. ; sole
  distributors for the U.S.A. and Canada, Elsevier North-Holland, 1979
  (English).

\bibitem{nurnberger}
G.~N\"urnberger, \emph{Approximation by spline functions}, Springer-Verlag,
  1989.

\bibitem{Nurnberger_multivariate_inter}
G.~Nurnberger and F.~Zeilfelder, \emph{Interpolation by spline spaces on
  classes of triangulations}, Journal of Computational and Applied Mathematics
  \textbf{119} (2000), no.~1-2, 347 -- 376.

\bibitem{remez57}
E.Ya Remez, \emph{General computational methods of {C}hebyshev approximation},
  Atomic Energy Commision Translation \textbf{4491} (1957), 1--85.

\bibitem{rice67}
J.~Rice, \emph{Characterization of {C}hebyshev approximation by splines}, SIAM
  Journal on Numerical Analysis \textbf{4} (1967), no.~4, 557--567.

\bibitem{rice63}
John Rice, \emph{Tchebycheff approximation in several variables}, Trans. Amer.
  Math. Soc. \textbf{109} (1963), 444--466.

\bibitem{Rockafellar70}
R.T. Rockafellar, \emph{Convex analysis}, Princeton University Press,
  Princeton, New Jersey, 1970.

\bibitem{Schumaker68}
L.~Schumaker, \emph{Uniform approximation by {C}hebyshev spline functions.
  {II}: free knots}, SIAM Journal of Numerical Analysis \textbf{5} (1968),
  647--656.

\bibitem{sukhorukovaalgorithmfixed}
Nadezda Sukhorukova, \emph{Vall\'ee poussin theorem and remez algorithm in the
  case of generalised degree polynomial spline approximation}, Pacific Journal
  of Optimization \textbf{6} (2010), no.~1, 103--114.

\bibitem{matrix}
Nadezda Sukhorukova, Julien Ugon, and David Yost, \emph{Chebyshev multivariate
  polynomial approximation: alternance interpretation}, MATRIX (2016), In
  Press.

\bibitem{Zalinescu2002}
C.~Zalinescu, \emph{Convex analysis in general vector spaces}, World
  Scientific, 2002.

\end{thebibliography}

\end{document}